\newtheorem{theorem}{Theorem}[section]
\newtheorem{lemma}[theorem]{Lemma}
\numberwithin{equation}{section}
\newcommand{\PP}{\mathbf{P}}
\newcommand{\pp}{\mathbf{p}}
\journal{Physica D}
\begin{document}

\begin{frontmatter}


\title{Orthogonal polynomials, Toda lattices and Painlev\'e equations}
\author{Walter Van Assche\fnref{supp}}
\fntext[supp]{Supported by FWO project G.0C9819N and EOS project 30889451.}
\ead{walter.vanassche@kuleuven.be}
\ead[url]{https://wis.kuleuven.be/analyse/members/walter}
\affiliation{organization={Department of Mathematics, KU Leuven},
               addressline={Celestijnenlaan 200B box~2400},
               city={Leuven},
               postcode={BE-3001},
               country={Belgium}}

\begin{abstract}
We give a survey of the connection between orthogonal polynomials, Toda lattices and related lattices, and Painlev\'e equations (discrete and continuous). \end{abstract}


\begin{keyword}
Orthogonal polynomials \sep Toda lattice \sep Painlev\'e equations \sep discrete Painlev\'e equations
\MSC 33C45 \sep 42C05 \sep 33E17 \sep 34M55 \sep 39A13 \sep 37K10
\end{keyword}

\end{frontmatter}


In this survey we give a brief introduction to orthogonal polynomials on the real line and on the unit circle in Section \ref{sec:OP} to fix the notation.
In Section \ref{sec:Toda} we will show how the Toda lattice and related lattice equations can be described in terms of orthogonal polynomials.
Section \ref{sec:DP} deals with discrete Painlev\'e equations for the recurrence coefficients of orthogonal polynomials, and Section \ref{sec:Pain}
deals with Painlev\'e differential equations for these recurrence coefficients. 

\section{Orthogonal polynomials}  \label{sec:OP}
\subsection{Orthogonal polynomials on the real line}
Let $\mu$ be a positive measure on the real line for which all the moments $m_n$ exist. Then the orthonormal polynomials are given by the
orthogonality relations
\[     \int_{\mathbb{R}} p_n(x)p_m(x)\, d\mu(x) = \delta_{m,n}, \]
with $p_n(x) = \gamma_n x^n +\cdots$ and $\gamma_n > 0$.
One of their most remarkable features is that they always satisfy a three term recurrence relation
\begin{equation}  \label{3trr}
   xp_n(x) = a_{n+1}p_{n+1}(x) + b_n p_n(x) + a_n p_{n-1}(x), \qquad  n \geq 0  
\end{equation}
with initial values $p_0(x)=1/\sqrt{m_0}$ and $p_{-1} = 0$. The recurrence coefficients are given by
\[   a_n = \int_{\mathbb{R}} xp_n(x)p_{n-1}(x)\, d\mu(x), \quad   b_n = \int_{\mathbb{R}} xp_n^2(x)\, d\mu(x), \]
and comparing the coefficient of $x^{n+1}$ in \eqref{3trr} one also finds
\begin{equation}  \label{a-gamma}
    a_{n+1} = \frac{\gamma_n}{\gamma_{n+1}}.  
\end{equation}    

The monic orthogonal polynomials are $P_n(x) = p_n(x)/\gamma_n$ and they satisfy the recurrence relation
\begin{equation}  \label{3TRR}
   xP_n(x) = P_{n+1}(x) + b_n P_n(x) + a_n^2 P_{n-1}(x), 
\end{equation}
with $P_0=1$ and $P_{-1}=0$. The square of the norm of the monic polynomial is
\begin{equation}   \label{Pnorm}
   \int_{\mathbb{R}}  P_n^2(x) \, d\mu(x) = \frac{1}{\gamma_n^2}.  
\end{equation}   
Relevant literature for orthogonal polynomials on the real line is Szeg\H{o} \cite{Szego}, the more recent book by Ismail \cite{Ismail} and \cite{AB}.

\subsection{Orthogonal polynomials on the unit circle}
Let $\nu$ be a positive measure on the unit circle $\{ z \in \mathbb{C} : |z| = 1 \}$. The orthonormal polynomials on the unit circle are given by
the orthogonality relations
\[   \int_0^{2\pi} \varphi_n(z) \overline{\varphi_m(z)}\, d\nu(\theta) = \delta_{m,n}, \qquad z=e^{i\theta},  \]
with $\varphi(z) = \kappa_n z^n + \cdots$ and $\kappa_n >0$.
The Szeg\H{o}-Levison recursion relation is
\[      \kappa_{n+1} z\varphi_{n}(z)  =  \kappa_n  \varphi_{n+1}(z) - \varphi_{n+1}(0) \varphi_n^*(z), \]
where $\varphi_n^*(z) = z^n \bar{\varphi}_n(1/z)$  is the reversed polynomial, with $\bar{\varphi}_n$ the polynomial with complex conjugated coefficients.
The reversed polynomials satisfy the orthogonality relations
\[    \int_0^{2\pi} \varphi_n^*(z) z^{-k}\, d\nu(\theta) = 0, \qquad   1 \leq k \leq n.  \]
The monic orthogonal polynomials $\Phi_n(z) = \varphi_n(z)/\kappa_n$ satisfy
\begin{equation}   \label{RecSz}
     z \Phi_n(z) = \Phi_{n+1}(z) + \overline{\alpha_n} \Phi_n^*(z), 
\end{equation}
with recurrence coefficients $\alpha_n = - \overline{\Phi_{n+1}(0)}$ which are nowadays known as Verblunsky coefficients.
The square of the norm of the monic orthogonal polynomial is
\[   \int_0^{2\pi} |\Phi_n(z)|^2 \, d\nu(\theta) = \frac{1}{\kappa_n^2}.  \]
Observe that $|z|=1$ on the unit circle so that $\Phi_n^*(z) = z^n\overline{\Phi_n(z)}$ on the unit circle, hence taking the norm on both sides of \eqref{RecSz} gives
\[    \frac{1}{\kappa_n^2} = \frac{1}{\kappa_{n+1}^2} + \frac{|\alpha_n|^2}{\kappa_n^2}, \]
from which one finds
\begin{equation}   \label{kappa-alpha}
     \frac{\kappa_n^2}{\kappa_{n+1}^2} = 1 - |\alpha_n|^2, 
\end{equation}     
which implies that $|\alpha_n| < 1$ for all $n$. One usually takes $\alpha_{-1}=-1$ which is compatible with $\Phi_0 = 1$.

The recurrence relation \eqref{RecSz} and its $*$-companion can be written in matrix form as
\begin{equation}   \label{Phitransfer}
    \begin{pmatrix} \Phi_{n+1}(z) \\ \Phi_{n+1}^*(z) \end{pmatrix}
     = \begin{pmatrix}  z & -\overline{\alpha_n} \\ -\alpha_n z & 1 \end{pmatrix}
          \begin{pmatrix} \Phi_n(z) \\ \Phi_n^*(z) \end{pmatrix}
\end{equation}          
and the matrix in this expression then serves as a transfer matrix with determinant $z(1-|\alpha_n|^2)$.

Orthogonal polynomials on the unit circle already appear in Szeg\H{o}'s book \cite[Ch. XI]{Szego}. The two books by Simon \cite{Simon} are
strongly recommended. See also Ismail \cite[Ch. 8]{Ismail} and \cite[Ch. 9]{AB}.

\section{Toda lattices}  \label{sec:Toda}
The Toda lattice is a system of differential equations for particles with an exponential interaction
\[   x_n''(t) = e^{x_{n-1}-x_n} - e^{x_n-x_{n+1}}, \qquad n \in \mathbb{Z},  \]
which was introduced by Morikazu Toda in 1967 (Toda \cite{Toda}).
We will consider the semi-infinite system (with $x_{-1}=-\infty$) or the finite system (with $x_{-1}=-\infty$ and $x_{n+1}=+\infty$)
for the connection with orthogonal polynomials on the real line.

The change of variables (Flaschka variables)
\[     a_k^2 = \exp(x_{k-1}-x_k), \quad b_k=-x_k', \]
gives the system of equations
\begin{eqnarray}
      (a_k^2)' &=&  a_k^2(b_k-b_{k-1}), \qquad 1 \leq k \leq n,    \label{Toda-a} \\
      b_k' &=& a_{k+1}^2 - a_k^2, \qquad 0 \leq k \leq n, \quad   \label{Toda-b}
\end{eqnarray}      
with $a_0^2 = 0 = a_{n+1}^2$ for the finite system and $a_0^2=0$ for the semi-infinite system.

\subsection{The Toda lattice}    
The semi-infinite Toda lattice is related to orthogonal polynomials for an exponential modification of a positive measure $\mu$ on the real line
with a factor $e^{xt}$:
\[    \int_{\mathbb{R}} p_n(x,t) p_m(x,t) e^{xt} \, d\mu(x) = \delta_{m,n},  \]
whenever all the moments of the measure $e^{xt}\, d \mu(x)$ exist. The Toda equations express the compatibility between the recurrence
relation \eqref{3TRR} and the dynamics of these polynomials as a function of $t$.

\begin{lemma}
Let $P_n(x,t)$ be the monic orthogonal polynomials for the measure $d\mu_t(x) = e^{xt}\, d\mu(x)$. If all the moments of $\mu_t$ exist,
then
\begin{equation}  \label{dPdt}
  \frac{d}{dt} P_n(x,t) = -a_n^2(t) P_{n-1}(x,t).  
\end{equation}
\end{lemma}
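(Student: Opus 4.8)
The plan is to exploit that $P_n(\cdot,t)$ is monic, so its leading coefficient equals the constant $1$ and differentiation in $t$ kills the $x^n$ term, leaving $\frac{d}{dt}P_n$ of degree at most $n-1$ in $x$. First I would expand this derivative in the orthogonal basis $\{P_0,\dots,P_{n-1}\}$,
\[
   \frac{d}{dt}P_n(x,t) = \sum_{k=0}^{n-1} c_{n,k}(t)\, P_k(x,t),
\]
and pin down the coefficients by taking inner products: $c_{n,k}\int P_k^2\,d\mu_t = \int \big(\tfrac{d}{dt}P_n\big)P_k\,d\mu_t$ for each $k\le n-1$.

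The key device is to differentiate the orthogonality relation $\int P_nP_k\,d\mu_t=0$ (valid for $k<n$) with respect to $t$. Since $d\mu_t=e^{xt}\,d\mu$ one has $\frac{d}{dt}\,d\mu_t = x\,d\mu_t$, so the product rule produces
\[
   \int \Big(\tfrac{d}{dt}P_n\Big)P_k\,d\mu_t + \int P_n\Big(\tfrac{d}{dt}P_k\Big)\,d\mu_t + \int x\,P_nP_k\,d\mu_t = 0 .
\]
For $k<n$ the middle integral vanishes because $\frac{d}{dt}P_k$ has degree at most $k-1<n$ while $P_n$ is orthogonal to every polynomial of lower degree. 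In the last integral $xP_k$ has degree $k+1$, so it too is orthogonal to $P_n$ unless $k+1\ge n$. Hence for $k<n-1$ all three terms vanish and $c_{n,k}=0$, leaving only the coefficient $c_{n,n-1}$.

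To evaluate that coefficient I would substitute the monic three-term recurrence \eqref{3TRR}, written as $xP_{n-1}=P_n+b_{n-1}P_{n-1}+a_{n-1}^2P_{n-2}$, into $\int x\,P_nP_{n-1}\,d\mu_t$; orthogonality collapses this to $\int P_n^2\,d\mu_t$. The differentiated relation for $k=n-1$ then gives $\int\big(\tfrac{d}{dt}P_n\big)P_{n-1}\,d\mu_t = -\int P_n^2\,d\mu_t$, and dividing by $\int P_{n-1}^2\,d\mu_t$ yields
\[
   c_{n,n-1} = -\frac{\int P_n^2\,d\mu_t}{\int P_{n-1}^2\,d\mu_t}
             = -\frac{\gamma_{n-1}^2}{\gamma_n^2} = -a_n^2,
\]
where the last two equalities use \eqref{Pnorm} and the identity $a_n=\gamma_{n-1}/\gamma_n$ from \eqref{a-gamma} (all quantities being those of $\mu_t$). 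This proves \eqref{dPdt}. The only delicate point is justifying differentiation under the integral sign, which is legitimate once the moments of $\mu_t$ exist throughout a $t$-neighbourhood; beyond that the argument is a degree count married to the recurrence relation.
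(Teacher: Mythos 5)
Your proof is correct and follows essentially the same route as the paper's: differentiate the orthogonality relations with respect to $t$, use the factor of $x$ produced by the product rule together with a degree count to conclude that $\frac{d}{dt}P_n$ is proportional to $P_{n-1}$, and then evaluate the constant via the norms \eqref{Pnorm} and the identity $a_n=\gamma_{n-1}/\gamma_n$ from \eqref{a-gamma}. The only cosmetic difference is that you test against the $t$-dependent polynomials $P_k(x,t)$, so you must also dispose of the extra term $\int P_n \bigl(\tfrac{d}{dt}P_k\bigr)\,d\mu_t$ (which you do correctly by a degree argument), whereas the paper tests against the fixed monomials $x^k$ and never encounters that term.
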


\begin{proof}
Since $P_{n}(x,t) = x^n + \cdots$, the derivative $dP_n/dt$ is a polynomial of degree $n-1$. Differentiating the orthogonality relations
\[   \int_{\mathbb{R}} P_n(x,t) x^k e^{xt}\, d\mu(x) = 0, \qquad 0 \leq k \leq n-1, \]
gives
\[    \int_{\mathbb{R}} \frac{dP_n(x,t)}{dt} x^k e^{xt}\, d\mu(x) + \int_{\mathbb{R}} P_n(x,t) x^{k+1} e^{xt}\, d\mu(x) = 0, \qquad
0 \leq k \leq n-1, \]
hence the orthogonality of the polynomial $P_n$ implies that
\[      \int_{\mathbb{R}} \frac{dP_n(x,t)}{dt} x^k e^{xt}\, d\mu(x)  = 0, \qquad  0 \leq k \leq n-2, \]
so that $dP_n(x,t)/dt$ is indeed  $c_n(t) P_{n-1}(x,t)$ for some $c_n(t)$. This $c_n(t)$ satisfies
\[     \int_{\mathbb{R}} \frac{dP_n(x,t)}{dt} x^{n-1} e^{xt}\, d\mu(x) = c_n(t) \int_{\mathbb{R}} P_{n-1}(x,t) x^{n-1} e^{xt}\, d\mu(x) = \frac{c_n(t)}{\gamma_{n-1}^2} . \]
For $k=n-1$ we have
\[   \int_{\mathbb{R}} \frac{dP_n(x,t)}{dt} x^{n-1} e^{xt}\, d\mu(x) + \int_{\mathbb{R}} P_n(x,t) x^{n} e^{xt}\, d\mu(x) = 0, \]
and since
\[    \int_{\mathbb{R}} P_n(x,t) x^n e^{xt}\, d\mu(x) = \int_{\mathbb{R}} P_n^2(x,t) e^{xt}\, d\mu(x) = \frac{1}{\gamma_n^2}  \]
we find
\[   \int_{\mathbb{R}}  \frac{dP_n(x,t)}{dt} x^{n-1} e^{xt}\, d\mu(x) = - \frac{1}{\gamma_n^2} = \frac{c_n(t)}{\gamma_{n-1}^2},  \]
so that
\[   c_n(t) = - \frac{\gamma_{n-1}^2}{\gamma_n^2} = -a_n^2(t).  \]
\end{proof}

If we now differentiate the recurrence relation \eqref{3TRR} with respect to the variable $t$, then
\begin{multline*} 
     x \frac{d}{dt} P_n(x,t) = \frac{d}{dt} P_{n+1}(x,t) + \frac{db_n(t)}{dt} P_n(x,t) + b_n(t) \frac{d}{dt}P_n(x,t)   \\
      + \frac{da_n^2}{dt} P_{n-1}(x,t)      + a_n^2 \frac{d}{dt} P_{n-1}(x,t).  
\end{multline*}
Now use \eqref{dPdt} to find
\begin{multline*}
    - a_n^2 xP_{n-1}(x,t) = -a_{n+1}^2 P_n(x,t) +    \frac{db_n(t)}{dt} P_n(x,t) - b_n(t) a_n^2 P_{n-1}(x,t)  \\
    +   \frac{da_n^2}{dt} P_{n-1}(x,t)
    - a_n^2 a_{n-1}^2 P_{n-2}(x,t).  
\end{multline*}
For $xP_{n-1}(x,t)$ we use the three term recurrence relation \eqref{3TRR} to find
\begin{multline}  \label{eq:2.4}
    -a_n^2 \bigl( P_n(x,t) + b_{n-1}P_{n-1}(x,t) + a_{n-1}^2 P_{n-2}(x,t) \bigr)  \\
    =    -a_{n+1}^2 P_n(x,t) +    \frac{db_n(t)}{dt} P_n(x,t) - b_n(t) a_n^2 P_{n-1}(x,t)  \\ +   \frac{da_n^2}{dt} P_{n-1}(x,t) 
    - a_n^2 a_{n-1}^2 P_{n-2}(x,t).  
\end{multline}    
Comparing the coefficients of $P_n(x,t)$ in \eqref{eq:2.4} gives
\[    \frac{db_n}{dt} = a_{n+1}^2 - a_n^2,  \]
and the coefficients of $P_{n-1}(x,t)$ give
\[    \frac{da_n^2}{dt} = a_n^2 (b_n-b_{n-1}) .  \]
These are the Toda lattice equations \eqref{Toda-a}--\eqref{Toda-b} for $n \geq 0$ and $a_0^2=0$. The initial conditions $a_n^2(0)$ and $b_n(0)$ are given by the recurrence coefficients of the orthogonal polynomials for the measure $\mu$.
A method for solving the semi-infinite Toda lattice equations is:
\begin{enumerate}
\item Use the initial conditions $a_n^2(0)$ and $b_n(0)$ to determine the orthogonality measure $\mu$ for the orthogonal polynomials $P_n(x,0)$.
\item Consider the modified measure $d\mu_t(x) = e^{xt} \, d\mu(x)$ to describe the dynamics as a function of $t$.
\item Find the recurrence coefficients for the orthogonal polynomials for the measure $d\mu_t(x) = e^{xt} \, d\mu(x)$.
\end{enumerate} 
Step 3 is known as the direct problem for orthogonal polynomials: find the recurrence coefficients if one knows the orthogonality measure.
Step 1 is the inverse problem for orthogonal polynomials: find the orthogonality measure if one knows the recurrence coefficients.
If one looks at the three term recurrence relation as a discrete version of the Schr\"odinger equation, then the recurrence coefficients correspond
to a discrete potential. Finding the potential from spectral data is known as the inverse problem in scattering theory, and hence the direct problem
for orthogonal polynomials corresponds to the inverse problem in discrete scattering.

\subsection{Lax pair for the Toda lattice}
One can consider the recurrence relation \eqref{3TRR} and the dynamics \eqref{dPdt} as the Lax pair for the Toda lattice, since the Toda lattice
equations express the compatibility between those two difference and differential equations. It is however somewhat more practical and handy
to use the corresponding Jacobi operator $J$ which has the matrix representation
\begin{equation}   \label{Jacobi}
  J = \begin{pmatrix}  b_0 & a_1 & 0 & 0 & 0 & 0 & 0 & 0 &\cdots \\
                                       a_1 & b_1 & a_2 & 0 & 0 & 0 & 0  & 0 & \cdots \\
                                       0 & a_2 & b_3 & a_3 & 0 &  0 & 0  & 0 &\cdots\\
                                       \vdots & & \ddots & \ddots & \ddots & & & &\vdots   \\
                                       0 & \cdots & 0 & a_{n-1} & b_{n-1} & a_{n} & 0 & 0 & \cdots\\
                                       0  & \cdots & 0 & 0 & a_{n} & b_{n} & a_{n+1} & 0 &\cdots\\
                                       0 & \cdots & 0 & 0 & 0 & a_{n+1} & b_{n+1} & a_{n+2}  &  \\
                                       \vdots & \cdots & \vdots & \vdots & \vdots  & & \ddots & \ddots & \ddots
               \end{pmatrix}  .
\end{equation}               
Let $\Lambda$ be the diagonal matrix $\textup{diag}(\gamma_0,\gamma_1,\gamma_2,\ldots,\gamma_n, \ldots)$, then
the recurrence relation \eqref{3trr} can  be written as
\begin{equation}  \label{JP}
     J  \Lambda \PP = x \Lambda \PP  
\end{equation}     
and \eqref{dPdt} becomes
\begin{equation}   \label{dPA}
       \frac{d}{dt} \PP = - \Lambda^{-1} J_- \Lambda \PP,  
\end{equation}       
where $J_-$ is the lower triangular part of $J$, and 
\[   \PP = \begin{pmatrix} P_0(x,t) \\ P_1(x,t) \\ P_2(x,t) \\ \vdots \\ P_n(x,t) \\ \vdots  \end{pmatrix} .  \]

\begin{theorem}   \label{thm:Toda-Lax}
The Toda lattice equations are
\[    J' = [J,A], \]
where $[J,A]=JA-AJ$ and $A= \frac12 (J_ - - J_+)$, with $J_-$ the lower triangular part of $J$ and $J_+$ the upper triangular part of $J$.
\end{theorem}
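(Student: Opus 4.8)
The plan is to verify the matrix identity $J'=[J,A]$ entry by entry and to check that it encodes precisely the two scalar Toda equations \eqref{Toda-a}--\eqref{Toda-b} derived above. Since those scalar equations may be assumed, the content of the theorem is purely a bookkeeping identity for tridiagonal matrices.

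First I would write out $A$ explicitly. Because $J$ in \eqref{Jacobi} is symmetric and tridiagonal, its diagonal $b_n$ occurs identically in $J_-$ and in $J_+$, so the diagonal cancels in $J_--J_+$. Hence $A=\frac12(J_--J_+)$ is the antisymmetric tridiagonal matrix with zero diagonal, subdiagonal entries $A_{n+1,n}=a_{n+1}/2$ and superdiagonal entries $A_{n,n+1}=-a_{n+1}/2$; equivalently $A_{i,i+1}=-\frac12 J_{i,i+1}$ and $A_{i+1,i}=+\frac12 J_{i+1,i}$. On the other side, $J'$ is tridiagonal with $(J')_{nn}=b_n'$ and $(J')_{n,n+1}=(J')_{n+1,n}=a_{n+1}'$.

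Next I would expand the commutator through $[J,A]_{ij}=\sum_k\bigl(J_{ik}A_{kj}-A_{ik}J_{kj}\bigr)$. As $J$ and $A$ are both tridiagonal, only a few indices $k$ contribute to each entry, so the sums are short. The first thing to confirm is that $[J,A]$ is again tridiagonal, i.e. that the entries two steps off the diagonal vanish: in $[J,A]_{n,n+2}$ the only surviving products are $J_{n,n+1}A_{n+1,n+2}$ from $JA$ and $A_{n,n+1}J_{n+1,n+2}$ from $AJ$, and since $A_{i,i+1}=-\frac12 J_{i,i+1}$ both equal $-\frac12 J_{n,n+1}J_{n+1,n+2}$ and cancel in the difference. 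This matches the tridiagonal shape of $J'$, so it remains only to compare diagonal and first off-diagonal entries. For the diagonal, the contributions to $[J,A]_{nn}$ come from $k=n-1$ and $k=n+1$ and collect to $a_{n+1}^2-a_n^2$; equating with $(J')_{nn}=b_n'$ reproduces \eqref{Toda-b}. For the superdiagonal, the contributions to $[J,A]_{n,n+1}$ come from the diagonal entries $b_n$ (via $k=n$ in $JA$) and $b_{n+1}$ (via $k=n+1$ in $AJ$) and combine to $\frac12 a_{n+1}(b_{n+1}-b_n)$; equating with $(J')_{n,n+1}=a_{n+1}'$ gives $a_{n+1}'=\frac12 a_{n+1}(b_{n+1}-b_n)$, which becomes \eqref{Toda-a} after multiplying by $2a_{n+1}$ to pass from $a_{n+1}$ to $a_{n+1}^2$. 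The subdiagonal entries yield the same equation because $J'$ is symmetric and $A$ antisymmetric, and the boundary value $a_0=0$ is automatically respected since the zeroth row and column of $A$ carry no subdiagonal entry.

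The main difficulty is organizational rather than conceptual: keeping the index conventions of \eqref{Jacobi} straight throughout the commutator, in particular remembering that the off-diagonal entry of $J$ linking rows $n$ and $n+1$ is $a_{n+1}$ and not $a_n$, and carrying the factor $\frac12$ correctly together with the chain-rule passage between the evolution of $a_{n+1}$ and that of $a_{n+1}^2$. Once the entry-by-entry identification is done, the equivalence of $J'=[J,A]$ with \eqref{Toda-a}--\eqref{Toda-b} is immediate.
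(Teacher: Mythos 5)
Your proof is correct, but it proceeds along a genuinely different route than the paper's. You take the scalar Toda equations \eqref{Toda-a}--\eqref{Toda-b} as already established (they are derived in the text just before the theorem) and then verify, entry by entry, that the tridiagonal matrix identity $J'=[J,A]$ encodes exactly those equations: the commutator is again tridiagonal, its diagonal entries give $b_n'=a_{n+1}^2-a_n^2$, and its off-diagonal entries give $a_{n+1}'=\tfrac12 a_{n+1}(b_{n+1}-b_n)$, equivalent to $(a_{n+1}^2)'=a_{n+1}^2(b_{n+1}-b_n)$. Your computations check out (including the vanishing of the $(n,n+2)$ entries and the boundary row), and your observation that the diagonal of $J$ drops out of $J_--J_+$ is harmless under either convention for whether the triangular parts carry the diagonal; note that the paper's own usage in \eqref{dPA} forces $J_-$ to be \emph{strictly} lower triangular, in which case there is simply no diagonal to cancel. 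The paper instead derives the Lax form directly from the polynomial dynamics without invoking the scalar equations: it differentiates the spectral relation $J\pp=x\pp$ for the orthonormal polynomials, substitutes $\pp'=L\pp-J_-\pp$, strips off $\pp$ using the orthogonality $\int \pp\,\pp^T e^{xt}\,d\mu=I$ to get $J'=-JL+JJ_-+LJ-J_-J$, and then eliminates the unknown diagonal matrix $L$ by averaging this identity with its transpose (using the symmetry of $J$). The trade-off: your argument is elementary bookkeeping and proves a clean two-way equivalence between the matrix and scalar forms, but it requires knowing the matrix $A$ in advance and leans on the earlier derivation; the paper's symmetrization argument is less transparent computationally but produces the antisymmetric matrix $A=\tfrac12(J_--J_+)$ naturally, explaining where the Lax pair comes from rather than merely confirming it.
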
              

\begin{proof}
If we use the orthonormal polynomials $\pp = \Lambda \PP$, then \eqref{JP} becomes $J\pp = x\pp$. Differentiating with respect to $t$ gives
\[    J' \pp + J \pp' = x\pp'.  \]
Equation \eqref{dPA} in terms of orthonormal polynomials is 
\[   \pp' = L \pp - J_- \pp, \]
with $L = -\Lambda (\Lambda^{-1})' =  \textup{diag}(\gamma_0'/\gamma_0,\gamma_1'/\gamma_1,\ldots,\gamma_n'/\gamma_n,\ldots)$.
We thus find
\[   J' \pp + J (L \pp - J_- \pp) = x (L \pp - J_- \pp).  \]
The term $x \pp$ can be replaced by $J \pp$ and one has
\[    J' \pp + J (L \pp - J_- \pp) =  L J \pp - J_- J \pp .  \]
The orthogonality of the orthonormal polynomials implies
\[    \int_{\mathbb{R}} \pp \pp^T e^{xt} \, d\mu(x) = I  \]
where $I$ is the identity matrix, hence we get
\[     J' =  -JL + J J_- + L J - J_- J .  \]
The matrix $J$ is symmetric, so taking the transpose gives 
\[    J' = -LJ + J_+ J + JL - J J_+ .  \]
Summing both equations gives
\[   2 J' = J ( J_- - J_+) - (J_- - J_+) J, \]
which is the desired form of the Toda equations.
\end{proof}

The pair of infinite matrices $(J,A)$ is known as the Lax pair for the Toda equations. They imply that the Toda lattice is an integrable system.

\subsection{The Toda hierarchy}
The orthogonal polynomials for the measure $e^{x^kt}\, d\mu(x)$, with $k$ a positive integer, 
\[    \int_{\mathbb{R}} p_n(x,t) p_m(x,t) e^{x^kt}\, d\mu(x) = \delta_{m,n}, \]
are related to the Toda hierarchy. The Toda lattice corresponds to $k=1$. The dynamics of these polynomials as a function of the parameter $t$ is:

\begin{lemma} 
Let $P_n(x,t)$ be the monic orthogonal polynomials for the measure $d\mu_t(x) = e^{x^kt}\, d\mu(x)$. If all the moments of $\mu_t$ exist, then
\begin{equation}  \label{dPdt-k}
   \frac{d}{dt} P_n(x,t) = - \sum_{j=1}^k (J^k)_{n,n-j} \frac{\gamma_{n-j}}{\gamma_n} P_{n-j}, 
\end{equation}   
where $J$ is the Jacobi matrix \eqref{Jacobi}.
\end{lemma}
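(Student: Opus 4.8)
The plan is to follow the structure of the $k=1$ proof while keeping track of the wider band that the factor $e^{x^kt}$ introduces. First I would note that, since $P_n(x,t)=x^n+\cdots$ is monic, its $t$-derivative is a polynomial of degree at most $n-1$ and hence expands as $\frac{d}{dt}P_n = \sum_{j\ge 1} c_{n,j}(t) P_{n-j}(x,t)$ in the monic orthogonal basis. To see that only finitely many terms occur, I would differentiate the orthogonality relations $\int_{\mathbb{R}} P_n x^\ell e^{x^kt}\,d\mu = 0$ for $0\le \ell \le n-1$, obtaining $\int_{\mathbb{R}} \frac{dP_n}{dt} x^\ell e^{x^kt}\,d\mu = -\int_{\mathbb{R}} P_n x^{\ell+k} e^{x^kt}\,d\mu$. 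The right-hand side vanishes as soon as $\ell+k\le n-1$, so $\frac{dP_n}{dt}$ is orthogonal to every $x^\ell$ with $\ell\le n-1-k$; consequently the expansion only runs over $P_{n-1},\dots,P_{n-k}$, i.e. $j$ from $1$ to $k$.

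Next I would determine the coefficients $c_{n,j}$ for $1\le j\le k$ by differentiating $\int_{\mathbb{R}} P_n P_{n-j} e^{x^kt}\,d\mu = 0$ instead. The term carrying $\frac{d}{dt}P_{n-j}$ drops out because that derivative has degree $<n$ and $P_n$ is orthogonal to all lower-degree polynomials, leaving $\int_{\mathbb{R}} \frac{dP_n}{dt} P_{n-j} e^{x^kt}\,d\mu = -\int_{\mathbb{R}} x^k P_n P_{n-j} e^{x^kt}\,d\mu$. By orthogonality and \eqref{Pnorm} the left-hand side equals $c_{n,j}/\gamma_{n-j}^2$, so everything reduces to evaluating the single integral on the right.

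The main work, and the step I expect to require the most care, is identifying that integral with an entry of a power of the Jacobi matrix. Passing to the orthonormal polynomials $p_n=\gamma_n P_n$ and iterating $J\pp = x\pp$ gives $x^k p_{n-j} = \sum_m (J^k)_{n-j,m} p_m$; inserting this and using orthonormality collapses $\int_{\mathbb{R}} x^k P_n P_{n-j} e^{x^kt}\,d\mu$ to $(J^k)_{n-j,n}/(\gamma_n\gamma_{n-j})$. Since $J$ is symmetric, so is $J^k$, whence $(J^k)_{n-j,n}=(J^k)_{n,n-j}$. Equating the two expressions for the integral yields $c_{n,j} = -(J^k)_{n,n-j}\,\gamma_{n-j}/\gamma_n$, which is precisely \eqref{dPdt-k}. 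As a sanity check, for $k=1$ this reduces to $c_{n,1} = -(J)_{n,n-1}\gamma_{n-1}/\gamma_n = -a_n^2$ via \eqref{a-gamma}, recovering the earlier lemma; for general $k$ the genuine content is that the appropriate band coefficient must be read off from $J^k$ rather than from $J$ itself.
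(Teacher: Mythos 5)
Your proof is correct and follows essentially the same route as the paper: expand $\frac{d}{dt}P_n$ in the monic orthogonal basis, differentiate the orthogonality relations to reduce the coefficients to $-\int x^k P_n P_{n-j}\, e^{x^kt}\, d\mu$, observe this vanishes for $j>k$, and identify the surviving integrals with entries of $J^k$. The only cosmetic difference is that you justify the identity $\int x^k p_m p_n\, e^{x^kt}\, d\mu = (J^k)_{m,n}$ by iterating $J\pp = x\pp$ and invoking the symmetry of $J$, whereas the paper simply asserts it; your version makes that step explicit.
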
 

\begin{proof}
Clearly the polynomial $dP_n(x,t)/dt$ is a polynomial of degree $n-1$. We can express it in terms of the monic orthogonal polynomials as
\[   \frac{d}{dt} P_n(x,t) = \sum_{j=1}^n c_j P_{n-j}(x,t),  \]
and the coefficients $c_j$ satisfy
\[    c_j \int_{\mathbb{R}} P_{n-j}^2(x,t) e^{x^kt}\, d\mu(x) = \int_{\mathbb{R}} \frac{dP_n(x,t)}{dt} P_{n-j}(x,t) e^{x^kt}\, d\mu(x).  \]
One has
\[     \int_{\mathbb{R}} P_{n-j}^2(x,t) e^{x^kt}\, d\mu(x) = \frac{1}{\gamma_{n-j}^2}, \]
and if we differentiate
\[   \int_{\mathbb{R}} P_n(x,t) P_{n-j}(x,t) e^{x^kt}\, d\mu(x) = 0, \qquad 1 \leq j \leq n  \]
with respect to $t$, then the orthogonality implies
\[    \int_{\mathbb{R}} \frac{dP_n(x,t)}{dt} P_{n-j}(x,t) e^{x^kt}\, d\mu(x) = -  \int_{\mathbb{R}} x^k P_n(x,t) P_{n-j}(x,t) e^{x^kt}\, d\mu(x). \]
The last integral is zero whenever $j > k$, so we only need the terms $1 \leq j \leq k$. The Jacobi matrix has the property
\[     (J)_{m,n} = \int_{\mathbb{R}} x p_m(x,t) p_n(x,t) e^{x^kt}\, d\mu(x), \]
and in general
\[   (J^k)_{m,n} = \int_{\mathbb{R}} x^k p_m(x,t)p_n(x,t) e^{x^kt}\, d\mu(x).  \]
If we use $p_n(x,t) = \gamma_n P_n(x,t)$, then we find
\[    \int_{\mathbb{R}} x^k P_n(x,t) P_{n-j}(x,t) e^{x^kt}\, d\mu(x) = \frac{(J^k)_{n,n-j}}{\gamma_n\gamma_{n-j}}, \]
so that
\[  c_j = - (J^k)_{n,n-j} \frac{\gamma_{n-j}}{\gamma_n}, \]
from which \eqref{dPdt-k} follows.
\end{proof}

The expression \eqref{dPdt-k} can be written in matrix form as
\[   \Lambda \PP' = - (J^k)_- \Lambda \PP, \]
where $(J^k)_-$ is the lower triangular part of $J^k$. This is equivalent with
\[      \pp' = L \pp  - (J^k)_- \pp, \]
with $L$ as in the proof of Theorem \ref{thm:Toda-Lax}.
Following the same calculations as before, we then find:

\begin{theorem}   \label{thm:Toda-k}
The Toda hierachy corresponds to the differential equations
\[   J' = [J,\frac12 \bigl((J^k)_- - (J^k)_+\bigr)] .   \]
\end{theorem}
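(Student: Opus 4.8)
The plan is to run the same argument as in the proof of Theorem~\ref{thm:Toda-Lax}, with the single matrix $J_\pm$ replaced throughout by $(J^k)_\pm$. First I would pass to the orthonormal polynomials $\pp=\Lambda\PP$. The three term recurrence relation still holds for the measure $e^{x^kt}\,d\mu(x)$, so \eqref{JP} again gives $J\pp=x\pp$, while the dynamics recorded just above the theorem reads $\pp'=L\pp-(J^k)_-\pp$ with $L=\mathrm{diag}(\gamma_0'/\gamma_0,\gamma_1'/\gamma_1,\ldots)$. Differentiating $J\pp=x\pp$ with respect to $t$ then yields $J'\pp+J\pp'=x\pp'$.

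Next I would substitute the expression for $\pp'$ and use that the scalar $x$ commutes with the matrices $L$ and $(J^k)_-$, so that after replacing $x\pp$ by $J\pp$ one has $xL\pp=LJ\pp$ and $x(J^k)_-\pp=(J^k)_-J\pp$. This turns the differentiated recurrence into
\[
  J'\pp+JL\pp-J(J^k)_-\pp = LJ\pp-(J^k)_-J\pp .
\]
Since the polynomials are orthonormal for the modified measure, $\int_{\mathbb{R}}\pp\pp^T e^{x^kt}\,d\mu(x)=I$; multiplying this vector identity on the right by $\pp^T e^{x^kt}\,d\mu(x)$ and integrating strips off the factor $\pp$ and leaves the matrix identity
\[
  J' = -JL+J(J^k)_- +LJ-(J^k)_-J .
\]

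Finally I would symmetrize. As $J$ is symmetric, so is $J^k$, and hence the transpose of its strictly lower triangular part is its strictly upper triangular part, $\bigl((J^k)_-\bigr)^T=(J^k)_+$; the diagonal of $J^k$ plays no role because it cancels in the difference $(J^k)_- - (J^k)_+$. Taking the transpose of the displayed matrix identity (using $J^T=J$ and $L^T=L$) produces a companion identity, and adding the two makes all terms containing $L$ cancel, leaving
\[
  2J' = J\bigl((J^k)_- - (J^k)_+\bigr) - \bigl((J^k)_- - (J^k)_+\bigr)J = \Bigl[J,(J^k)_- - (J^k)_+\Bigr],
\]
which is exactly $J'=[J,\frac12((J^k)_- - (J^k)_+)]$. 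I expect the only real point to watch is the triangular bookkeeping, namely keeping $(J^k)_\pm$ strictly off-diagonal and confirming $\bigl((J^k)_-\bigr)^T=(J^k)_+$; everything else is the verbatim analogue of the $k=1$ computation.
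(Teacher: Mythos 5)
Your proposal is correct and is exactly the argument the paper intends: the paper proves the case $k=1$ in Theorem~\ref{thm:Toda-Lax} and then states that Theorem~\ref{thm:Toda-k} follows by ``the same calculations as before,'' i.e.\ with $J_\pm$ replaced by $(J^k)_\pm$, which is precisely what you carried out, including the key points that $\pp'=L\pp-(J^k)_-\pp$, that orthonormality strips off $\pp$, and that symmetry of $J^k$ gives $\bigl((J^k)_-\bigr)^T=(J^k)_+$ so the $L$-terms cancel upon adding the transposed identity.
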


As a special case, one can start with a symmetric measure $\mu$, so that $b_n=0$ for all $n \geq 0$. When $k=2$ we then have
\[    (J^2)_{n,n} = a_n^2+a_{n+1}^2, \quad  (J^2)_{n,n-2} = a_n a_{n-1},  \quad  (J^2)_{n,n+2} = a_{n+1}a_{n+2} , \]
and $(J^2)_{n,m} = 0$ elsewhere, so that 
\[      a_n' = \frac12 a_n (a_{n+1}^2 - a_{n-1}^2),  \qquad  n \geq 1, \]
with $a_0=0$, or
\[    (a_n^2)' = a_n^2 (a_{n+1}^2 - a_{n-1}^2) ,  \qquad n \geq 1, \]
and this is known as the Langmuir lattice or the Kac-van Moerbeke equations \cite[\S 2.4]{WVA}.

\subsection{Ablowitz-Ladik lattice}
The Ablowitz-Ladik lattice (or the Schur flow) is related to orthogonal polynomials on the unit circle with an exponential modification of a positive
measure $\nu$ on the unit circle with a factor $e^{t\cos \theta}$:
\[    \int_0^{2\pi} \varphi_n(z,t) \overline{\varphi_m(z,t)} e^{t\cos \theta}\, d\nu(\theta) = \delta_{m,n}.  \]
The Ablowitz-Ladik equations express the compatibility between the recurrence relation \eqref{RecSz} and the dynamics of the orthogonal polynomials
as a function of $t$. See Golinskii \cite{Gol2006} and Nenciu \cite{Nenciu} for a detailed analysis.

\begin{lemma}
Let $\Phi_n(z,t)$ be the monic orthogonal polynomials for the measure $d\nu_t(\theta) = e^{t\cos \theta}\, d\nu(\theta)$ on the unit circle.
Then
\begin{equation}  \label{dPhidt}
     \frac{d}{dt} \Phi_n(z,t) = - \frac{\kappa_{n-1}^2}{2 \kappa_{n}^2} \left( \Phi_{n-1}(z,t) + \overline{\alpha_n} \Phi_{n-1}^*(z,t) \right). 
\end{equation}     
\end{lemma}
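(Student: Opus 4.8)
The plan is to prove the equivalent statement that the polynomial
\[
R_n(z,t) := \frac{d}{dt}\Phi_n(z,t) + \frac{\kappa_{n-1}^2}{2\kappa_n^2}\bigl(\Phi_{n-1}(z,t) + \overline{\alpha_n}\,\Phi_{n-1}^*(z,t)\bigr)
\]
vanishes identically (for $n\geq 1$). Since $\Phi_n(z,t)=z^n+\cdots$ is monic, its leading coefficient is constant in $t$, so $d\Phi_n/dt$ is a polynomial of degree at most $n-1$; as $\Phi_{n-1}$ and $\Phi_{n-1}^*$ also have degree at most $n-1$, so does $R_n$. It therefore suffices to show that $R_n$ is orthogonal, in the inner product of $\nu_t$, to $1,z,\dots,z^{n-1}$: a polynomial of degree at most $n-1$ orthogonal to all of these is orthogonal to itself and hence zero. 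Concretely I would compute $\int_0^{2\pi} R_n(z,t)\,z^{-k}\,d\nu_t(\theta)$ for $0\le k\le n-1$ and check that it vanishes, exactly mirroring the differentiation-of-orthogonality method used in the Toda lemma for $P_n$.

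For the derivative term I would differentiate the orthogonality relations $\int_0^{2\pi}\Phi_n(z,t)\,z^{-k}\,d\nu_t=0$ ($0\le k\le n-1$) with respect to $t$. Since $z^{-k}$ is independent of $t$ and $\tfrac{d}{dt}d\nu_t=\cos\theta\,d\nu_t$ with $\cos\theta=(z+z^{-1})/2$, this gives
\[
\int_0^{2\pi}\frac{d\Phi_n}{dt}\,z^{-k}\,d\nu_t = -\tfrac12\Bigl(\int_0^{2\pi}\Phi_n\,z^{-(k-1)}\,d\nu_t + \int_0^{2\pi}\Phi_n\,z^{-(k+1)}\,d\nu_t\Bigr).
\]
Writing $I_m:=\int_0^{2\pi}\Phi_n\,z^{-m}\,d\nu_t$, the standard moment values $I_m=\delta_{m,n}/\kappa_n^2$ for $0\le m\le n$ annihilate all interior terms, leaving nonzero contributions only at the two endpoints: $k=n-1$, where $I_n$ produces $-1/(2\kappa_n^2)$, and $k=0$, where $I_{-1}$ survives and gives $-\tfrac12\int_0^{2\pi}\Phi_n\,z\,d\nu_t$, a moment lying outside the orthogonality range.

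For the two remaining terms I would use $\int_0^{2\pi}\Phi_{n-1}\,z^{-k}\,d\nu_t=\delta_{k,n-1}/\kappa_{n-1}^2$ and the reversed-polynomial orthogonality $\int_0^{2\pi}\Phi_{n-1}^*\,z^{-k}\,d\nu_t=0$ for $1\le k\le n-1$, together with $\int_0^{2\pi}\Phi_{n-1}^*\,d\nu_t=1/\kappa_{n-1}^2$ (which follows from $\Phi_{n-1}^*=z^{n-1}\overline{\Phi_{n-1}}$ on $|z|=1$ by conjugating and applying the norm relation \eqref{Pnorm}'s circle analogue). Combining everything, the cases $1\le k\le n-2$ give $0$ immediately, while $k=n-1$ gives $-1/(2\kappa_n^2)+\frac{\kappa_{n-1}^2}{2\kappa_n^2}\cdot\frac{1}{\kappa_{n-1}^2}=0$.

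The genuinely nonroutine point, and the step I expect to be the main obstacle, is the endpoint $k=0$. There the derivative contributes $-\tfrac12\int_0^{2\pi}\Phi_n\,z\,d\nu_t$ and the last term contributes $\frac{\kappa_{n-1}^2}{2\kappa_n^2}\,\overline{\alpha_n}\cdot\frac{1}{\kappa_{n-1}^2}=\frac{\overline{\alpha_n}}{2\kappa_n^2}$, so I must establish the identity $\int_0^{2\pi}\Phi_n\,z\,d\nu_t=\overline{\alpha_n}/\kappa_n^2$. This is precisely where the Szeg\H{o} recurrence \eqref{RecSz} must enter: integrating $z\Phi_n=\Phi_{n+1}+\overline{\alpha_n}\,\Phi_n^*$ against $d\nu_t$ and using $\int_0^{2\pi}\Phi_{n+1}\,d\nu_t=0$ together with $\int_0^{2\pi}\Phi_n^*\,d\nu_t=1/\kappa_n^2$ yields the required value, after which the $k=0$ contribution cancels. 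This endpoint is the crux because it is the only place where the Verblunsky coefficient $\alpha_n$ and the reversed polynomial $\Phi_n^*$ are forced in through the recurrence rather than through bare orthogonality.
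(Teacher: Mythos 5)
Your proof is correct and follows essentially the same route as the paper: differentiate the orthogonality relations with respect to $t$, evaluate the resulting moments (with the Szeg\H{o} recurrence \eqref{RecSz} supplying the crucial endpoint value $\int_0^{2\pi} z\Phi_n\,d\nu_t = \overline{\alpha_n}/\kappa_n^2$ at $k=0$), and use the norm identities for $\Phi_{n-1}$ and $\Phi_{n-1}^*$. The only difference is packaging: the paper expands $d\Phi_n/dt$ in the two-dimensional space spanned by $\Phi_{n-1}$ and $\Phi_{n-1}^*$ and solves for the two coefficients, whereas you verify directly that the difference $R_n$ is orthogonal to $1,z,\dots,z^{n-1}$ and hence vanishes---the underlying computations are identical.
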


\begin{proof}
Differentiating the orthogonality relations
\[   \int_0^{2\pi} \Phi_n(z,t) z^{-k} e^{t\cos \theta} \, d\nu(\theta)  = 0, \qquad 0 \leq k \leq n-1   \]
with respect to $t$ gives for $0 \leq k \leq n-1$
\[     \int_0^{2\pi} \frac{d}{dt} \Phi_n(z,t) z^{-k} e^{t\cos \theta} \, d\nu(\theta) +  \int_0^{2\pi} \Phi_n(z,t) z^{-k} \frac{z+1/z}{2} e^{t\cos \theta} \, d\nu(\theta) = 0. \]
The orthogonality relations for $\Phi_n$ then imply
\[    \int_0^{2\pi} \frac{d}{dt} \Phi_n(z,t) z^{-k} e^{t\cos \theta} \, d\nu(\theta) = 0, \qquad 1 \leq k \leq n-2.  \]
The polynomial $d\Phi_n(z,t)/dt$ is of degree $n-1$, and the above orthogonality relations imply that
\[   \frac{d}{dt} \Phi_n(z,t) = c_1(t) \Phi_{n-1}(z,t) + c_2 \Phi_{n-1}^*(z,t).  \]
The coefficients $c_1$ and $c_2$ can be obtained from
\[        c_1 \int_0^{2\pi} \Phi_{n-1}(z,t) z^{-n} e^{t\cos \theta}\, d\nu(\theta)
    = \int_0^{2\pi} \frac{d\Phi_n(z,t)}{dt} z^{-n} e^{t\cos\theta}\, d\nu(\theta), \]
and
\[        c_2 \int_0^{2\pi} \Phi_{n-1}^*(z,t) e^{t\cos \theta}\, d\nu(\theta)
    =            \int_0^{2\pi} \frac{d\Phi_n(z,t)}{dt} e^{t\cos\theta}\, d\nu(\theta).\]
One has
\[     \int_0^{2\pi}  \Phi_{n-1}(z,t) z^{-n} e^{t\cos \theta}\, d\nu(\theta) 
      =   \int_0^{2\pi}  |\Phi_{n-1}^*(z,t)|^2 e^{t\cos \theta}\, d\nu(\theta) = \frac{1}{\kappa_{n-1}^2}, \]
and
\[   \int_0^{2\pi} \Phi_{n-1}^*(z,t) e^{t\cos \theta}\, d\nu(\theta) 
= \int_0^{2\pi}  z^n \overline{\Phi_n(z,t)} e^{t\cos \theta} \, d\nu(\theta) =\frac{1}{\kappa_{n-1}^2}, \]
and one also has
\begin{align*}  
  \int_0^{2\pi} \frac{d}{dt} \Phi_n(z,t) z^{-n+1} e^{t\cos \theta} \, d\nu(\theta) 
 &= - \int_0^{2\pi} \Phi_n(z,t) z^{-n+1} \frac{z+1/z}{2} e^{t\cos \theta} \, d\nu(\theta) \\ 
 &= - \frac{1}{2\kappa_n^2}, 
\end{align*} 
and
\begin{align*}
    \int_0^{2\pi} \frac{d}{dt} \Phi_n(z,t)  e^{t\cos \theta} \, d\nu(\theta) 
    &= - \int_0^{2\pi} \Phi_n(z,t) \frac{z+1/z}{2} e^{t\cos \theta} \, d\nu(\theta) \\
   &=  - \frac12 \int_0^{2\pi}  z\Phi_n(z) e^{t\cos \theta}\, d\nu(\theta). 
\end{align*} 
By using the recurrence relation \eqref{RecSz} we find
 \[  \int_0^{2\pi}  z\Phi_n(z) e^{t\cos \theta}\, d\nu(\theta) = \overline{\alpha_n} \int_0^{2\pi} \Phi_n^*(z,t) e^{t\cos \theta} \, d\nu(\theta)
    =  \frac{\overline{\alpha_n}}{\kappa_n^2}.  \]
Hence    
\[    c_1(t) =  - \frac{\kappa_{n-1}^2}{2\kappa_n^2}, \quad    c_2(t) = - \overline{\alpha_n} \frac{\kappa_{n-1}^2}{2\kappa_n^2} , \]
from which the result follows.
\end{proof}

The expression \eqref{dPhidt} and its $*$-companion can be written in matrix form as
\begin{equation}  \label{dPhidtmatrix}
    \frac{d}{dt} \begin{pmatrix}  \Phi_n(z,t) \\ \Phi_n^*(z,t) \end{pmatrix}
        = - \frac{\kappa_{n-1}^2}{\kappa_n^2} \begin{pmatrix} 1 & \overline{\alpha_n(t)} \\ z \alpha_n(t) & z \end{pmatrix}
           \begin{pmatrix}  \Phi_{n-1}(z,t) \\ \Phi_{n-1}^*(z,t) \end{pmatrix},
\end{equation}
where the matrix is  $(1-|\alpha_n|^2)$ times the inverse of the transfer matrix in \eqref{Phitransfer}.
The compatibility between the recurrence relation \eqref{Phitransfer} and the dynamics \eqref{dPhidtmatrix} is as follows: differentiate
\eqref{Phitransfer} to find 
\begin{multline*}   \frac{d}{dt}  \begin{pmatrix}  \Phi_{n+1}(z,t) \\ \Phi_{n+1}^*(z,t) \end{pmatrix}             
    = \begin{pmatrix} 0 & \overline{\alpha_n'(t)} \\ - z \alpha_n'(t) & 0 \end{pmatrix}
      \begin{pmatrix}  \Phi_n(z,t) \\ \Phi_n^*(z,t) \end{pmatrix}  \\
       + \begin{pmatrix} z & -\overline{\alpha_n(t)} \\ -z\alpha_n(t) & 1 \end{pmatrix}  
       \frac{d}{dt}  \begin{pmatrix} \Phi_n(z,t) \\ \Phi_n^*(z,t)  \end{pmatrix}  
\end{multline*}       
 and using \eqref{dPhidtmatrix} this gives 
 \begin{multline*}
     - \frac{\kappa_n^2}{2\kappa_{n+1}^2}   \begin{pmatrix}  1 & \overline{\alpha_{n+1}(t)} \\ z \alpha_{n+1}(t) & z \end{pmatrix}
        \begin{pmatrix} \Phi_n(z,t) \\ \Phi_n^*(z,t)  \end{pmatrix}
        =  \begin{pmatrix} 0 & \overline{\alpha_n'(t)} \\ - z \alpha_n'(t) & 0 \end{pmatrix}
      \begin{pmatrix}  \Phi_n(z,t) \\ \Phi_n^*(z,t) \end{pmatrix} \\
             - \frac{\kappa_{n-1}^2}{2\kappa_n^2} z (1-|\alpha_n|^2) \begin{pmatrix} \Phi_{n-1}(z,t) \\ \Phi_{n-1}^*(z,t) \end{pmatrix}.  
\end{multline*}
Multiply both sides by the transfer matrix in \eqref{Phitransfer} with $n \mapsto n-1$ to get
 \begin{multline*}
     - \frac{\kappa_n^2}{2\kappa_{n+1}^2}   \begin{pmatrix} z & - \overline{\alpha_{n-1}(t)} \\ -z \alpha_{n-1}(t) & 1 \end{pmatrix}
          \begin{pmatrix}  1 & \overline{\alpha_{n+1}(t)} \\ z \alpha_{n+1}(t) & z \end{pmatrix}
        \begin{pmatrix} \Phi_n(z,t) \\ \Phi_n^*(z,t)  \end{pmatrix}  \\
        =   \begin{pmatrix} z & - \overline{\alpha_{n-1}(t)} \\ -z \alpha_{n-1}(t) & 1 \end{pmatrix}
        \begin{pmatrix} 0 & \overline{\alpha_n'(t)} \\ - z \alpha_n'(t) & 0 \end{pmatrix}
      \begin{pmatrix}  \Phi_n(z,t) \\ \Phi_n^*(z,t) \end{pmatrix} \\
             - \frac{\kappa_{n-1}^2}{2\kappa_n^2} z (1-|\alpha_n|^2) \begin{pmatrix} \Phi_{n}(z,t) \\ \Phi_{n}^*(z,t) \end{pmatrix}.  
\end{multline*}
If we work out the matrix products, then this identity is true if and only if
\begin{multline*}  - \frac{\kappa_n^2}{2\kappa_{n+1}^2} \begin{pmatrix} 1 - \overline{\alpha_{n-1}}\alpha_{n+1} & \overline{\alpha_{n+1}}-\overline{\alpha_{n-1}} \\
   -\alpha_{n-1}+\alpha_{n+1} & 1 - \alpha_{n-1}\overline{\alpha_{n+1}} \end{pmatrix} \\
    = \begin{pmatrix} \alpha_n' \overline{\alpha_{n-1}} & - \overline{\alpha_n'} \\ - \alpha_n' & \overline{\alpha_n'} \alpha_{n-1} \end{pmatrix}
     - \frac{\kappa_{n-1}^2}{2\kappa_n^2} (1-|\alpha_n|^2) \begin{pmatrix} 1 & 0 \\ 0 & 1  \end{pmatrix}  .
\end{multline*}
If one uses \eqref{kappa-alpha} then one gets the equation
\begin{equation}  \label{Ablowitz-Ladik}
     \alpha_n'(t) = \frac12 (1-|\alpha_n|^2) (\alpha_{n+1}-\alpha_{n-1}) , \qquad n=0,1,2,\ldots 
\end{equation}
which are the equations for the Ablowitz-Ladik lattice (or the Schur flow).

\subsection{Lax pair for the Ablowitz-Ladik lattice}

For orthogonal polynomials on the unit circle, there are two matrix representations that replace the Jacobi matrix: an infinite Hessenberg matrix
known as the GGT matrix (Geronimus-Gragg-Teplyaev) and the CMV matrix (Cantero-Moral-Vel\'azquez \cite{CMV}), see \cite[Chapter 4]{Simon}. 
The CMV matrix is the most convenient for a Lax pair formulation. Even though CMV is named after the authors of \cite{CMV}, the
matrix representation was known earlier in numerical linear algebra, where Ammar, Gragg and Reichel and later Bunse-Gerstner and Elsner
obtained essentially the same results (see Watkins \cite{Watkins} for a survey of their work, and Simon \cite{SimonCMV}).
The CMV matrix appears if one orthogonalizes $1, z, z^{-1}, z^2, z^{-2}, z^3, z^{-3}, \cdots$. Recall that one needs all integer power of $z$ 
for completeness on the unit circle. The orthogonal basis that results is $\{ \chi_n, n=0,1,2,\ldots \}$ with
\[    \chi_{2n}(z) = z^{-n} \varphi_{2n}^*(z), \qquad  \chi_{2n+1}(z) = z^{-n} \varphi_{2n}(z), \qquad   n \geq 0.  \]
The CMV matrix $C = \bigl(c_{m,n}\bigr)_{m,n=0}^\infty$ then has the entries
\[       c_{m,n} =      \int_{0}^{2\pi}  z \chi_n(z) \overline{\chi_m(z)} \, d\nu(\theta).   \]
It is a pentadiagonal infinite matrix, and it has a nice factorization $C=LM$, where
\[   L = \begin{pmatrix} \Theta_0 & 0 & 0 & 0 &\cdots \\
                                    0 & \Theta_2 & 0 & 0 & \cdots \\
                                    0 & 0 & \Theta_4 & 0 & \cdots \\
                                    \vdots & & & \ddots & \vdots \end{pmatrix},
       \quad
      M = \begin{pmatrix} 1 & 0 & 0 & 0 &\cdots \\
                                    0 & \Theta_1 & 0 & 0 & \cdots \\
                                    0 & 0 & \Theta_3 & 0 & \cdots \\
                                    \vdots & & & \ddots & \vdots \end{pmatrix},      \]
where $\Theta_n$ are the $2\times 2$ matrices                                                         
\[   \Theta_n = \begin{pmatrix} \overline{\alpha_n} & \rho_n \\  \rho_n & - \alpha_n  \end{pmatrix}, \qquad \rho_n = \sqrt{1-|\alpha_n|^2}.  \]

\begin{theorem}
Let $\varphi_n(z,t)$ be the orthonomal polynomials for the measure $d\nu_t(\theta) = e^{t\cos\theta} \, d\nu(\theta)$ on the unit circle,
and $C(t)$ the corresponding CMV matrix. Then the Ablowitz-Ladik lattice (Schur flow) has the Lax pair representation
\[     \frac{d}{dt} C(t) = \frac12 [B,C], \]
with 
\begin{align*}  B &= \frac{(C+C^*)_+-(C+C^*)_-}{2} \\
                          &= \frac12 \begin{pmatrix} 0 & \rho_0\overline{\Delta}_0 & \rho_0\rho_1 & &  \\
                                                  -\rho_0\Delta_0 & 0 & \rho_1\Delta_1 & \rho_1\rho_2 &  \\
                                                  -\rho_0\rho_1 & -\rho_1\overline{\Delta}_1 & 0 & \rho_2 \overline{\Delta}_2 & \rho_2\rho_3  \\
                                                         & \ddots & \ddots & \ddots &\ddots & \ddots 
                              \end{pmatrix} = -B^*,  
\end{align*}                                                   
with $\Delta_n = \alpha_{n+1}-\alpha_{n-1}$ and $\alpha_{-1}=-1$.     
\end{theorem}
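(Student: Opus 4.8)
The plan is to mirror the proof of the Lax representation for the Toda lattice (Theorem~\ref{thm:Toda-Lax}), replacing the symmetric Jacobi matrix $J$ by the unitary CMV matrix $C$ and replacing ``take the transpose'' by ``take the adjoint''. The starting observation is that $C$ is the matrix of the operator $M_z$ of multiplication by $z$ in the orthonormal basis $\{\chi_n(\cdot,t)\}$, so that $C^*$ is the matrix of $M_{1/z}=M_{\bar z}$ and $\tfrac12(C+C^*)$ is the Hermitian, pentadiagonal matrix of $M_{\cos\theta}$. Since $C$ is unitary, hence normal, it commutes with $C^*$ and therefore with $C+C^*$; this commutation is what will eventually absorb the factor $\tfrac12$. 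I also record that $B=\tfrac12\bigl((C+C^*)_+-(C+C^*)_-\bigr)$ is skew-Hermitian, $B^*=-B$, because the strictly upper part of a Hermitian matrix is the adjoint of its strictly lower part.

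Next I would determine the evolution of the basis. Writing $\tfrac{d}{dt}\boldsymbol{\chi}=K\boldsymbol{\chi}$ for the unknown matrix $K$, two facts pin it down. First, the monic dynamics \eqref{dPhidt} and its $*$-companion express $\dot\Phi_n$ through the lower-level objects $\Phi_{n-1},\Phi_{n-1}^*$; translating through $\varphi_n=\kappa_n\Phi_n$ and the definition of $\chi_n$ shows that $\tfrac{d}{dt}\chi_n$ lies in $\operatorname{span}\{\chi_0,\dots,\chi_n\}$, i.e.\ $K$ is lower triangular. Second, differentiating the orthonormality $\int_0^{2\pi}\chi_n\overline{\chi_m}\,e^{t\cos\theta}\,d\nu=\delta_{m,n}$ and using $\partial_t e^{t\cos\theta}=\cos\theta\,e^{t\cos\theta}$ gives $K+K^*=-\tfrac12(C+C^*)=:-W$. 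Comparing strictly lower parts yields $K_-=-W_-$, while the diagonal relation $2\operatorname{Re}K_{nn}=-W_{nn}$, together with the normalization $\kappa_n>0$ (which forces $K_{nn}$ to be real, the analogue of the real diagonal $L$ in the Toda proof), gives $K_{nn}=-\tfrac12 W_{nn}$. Hence $K=-W_--\tfrac12 W_0$, where $W_0$ is the diagonal of $W$.

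The Lax equation then follows exactly as in the Toda case. Differentiating the relation that $C$ is the matrix of the $t$-independent operator $M_z$, and projecting with the orthonormality $\int_0^{2\pi}\boldsymbol{\chi}\,\boldsymbol{\chi}^{*}\,e^{t\cos\theta}\,d\nu=I$, produces $\tfrac{d}{dt}C=[K,C]$. Because $W$ commutes with $C$, I may replace $K$ by $K+\tfrac12 W$ inside the commutator; since $(C+C^*)_\pm=2W_\pm$ one checks $K+\tfrac12 W=\tfrac12(W_+-W_-)=\tfrac12 B$, whence $\tfrac{d}{dt}C=\tfrac12[B,C]$, the asserted Lax form. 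Finally, the explicit pentadiagonal shape of $B$ is obtained by inserting the factorization $C=LM$ with its $2\times2$ blocks $\Theta_n$, forming $C+C^*$, and reading off the strictly-upper-minus-strictly-lower part; the entries collapse to the displayed expressions in $\rho_n$ and $\Delta_n=\alpha_{n+1}-\alpha_{n-1}$, and the scalar content of $\tfrac{d}{dt}C=\tfrac12[B,C]$ reproduces the Ablowitz--Ladik system \eqref{Ablowitz-Ladik}.

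I expect the main obstacle to be the second step: passing carefully from the monic dynamics \eqref{dPhidt} to an evolution for the CMV functions $\chi_n$, since the $*$-operation and the even/odd interleaving in the definition of $\chi_n$ make the verification of the lower-triangular (in fact banded) structure of $K$ delicate. Once $K=-W_--\tfrac12 W_0$ is established, the commutator manipulation and the explicit computation of $B$ from $C=LM$ are routine, if lengthy.
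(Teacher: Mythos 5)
The paper offers no proof of this theorem to compare against: it simply refers the reader to Golinskii, Nenciu, Simon and Ismail. So your proposal can only be measured against those standard arguments and against the paper's own proof of Theorem~\ref{thm:Toda-Lax}, which you explicitly take as your template --- and that is indeed the right template, both structurally and historically. The main pillars of your argument are all sound: $\operatorname{span}\{\chi_0,\dots,\chi_{2n}\}=\operatorname{span}\{z^{-n},\dots,z^{n}\}$ and $\operatorname{span}\{\chi_0,\dots,\chi_{2n+1}\}=\operatorname{span}\{z^{-n},\dots,z^{n+1}\}$, while $\partial_t\chi_n$ is a Laurent polynomial with the same monomial support as $\chi_n$, so the evolution matrix is triangular (this span argument is cleaner and safer than routing through \eqref{dPhidt} and the $*$-operation, which is exactly the delicate path you flagged); the diagonal entries are $\kappa_n'/\kappa_n$, hence real because $\kappa_n>0$; $C$ is unitary because the Laurent polynomials are dense in $L^2$ of any measure on the circle, so $[C+C^*,C]=[C^*,C]=0$; and the algebra $-W_--\tfrac12 W_0+\tfrac12 W=\tfrac12(W_+-W_-)=\tfrac12 B$ is correct, where $W=\tfrac12(C+C^*)$ and $W_+,W_-,W_0$ denote its strictly upper, strictly lower and diagonal parts.

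The one defect is a transpose/conjugation inconsistency, and you should be aware that your final formula comes out right only because two slips cancel. With your row convention $\partial_t\chi_n=\sum_k K_{nk}\chi_k$ ($K$ lower triangular), differentiating $\int_0^{2\pi}\chi_n\overline{\chi_m}\,e^{t\cos\theta}\,d\nu=\delta_{nm}$ gives $K_{nm}+\overline{K_{mn}}+W_{mn}=0$, i.e.\ $K+K^*=-W^{T}=-\overline{W}$, not $-W$; and since the paper's convention $c_{mn}=\int z\chi_n\overline{\chi_m}\,d\nu$ means $z\chi_n=\sum_m c_{mn}\chi_m$, the compatibility argument yields $(C^{T})'=[K,C^{T}]$, i.e.\ $C'=[C,K^{T}]$, not $C'=[K,C]$. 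Correcting both: $K=-\overline{W_-}-\tfrac12 W_0$, so $K^{T}=-W_+-\tfrac12 W_0$ and $C'=[C,K^{T}]=[W_++\tfrac12 W_0,C]=\tfrac12[B,C]$ after subtracting $\tfrac12[W,C]=0$; your uncorrected computation $[K,C]=[K+\tfrac12 W,C]=\tfrac12[B,C]$ happens to give the same answer, which is why the inconsistency goes unnoticed. The cleanest repair is to avoid vectors of functions altogether and differentiate the matrix elements directly: setting $G_{kn}=\int_0^{2\pi}\partial_t\chi_n\,\overline{\chi_k}\,e^{t\cos\theta}\,d\nu$ (so that $G$ is upper triangular, $G+G^*=-W$, $G_+=-W_+$, $G_0=-\tfrac12 W_0$), the product rule applied to $c_{mn}(t)$ gives
\[
    C'=CG+G^*C+CW=[C,W_-+\tfrac12 W_0]=\tfrac12[B,C],
\]
the last step again by $[W,C]=0$. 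After this, the explicit pentadiagonal form of $B$ from $C=LM$ and the identification of the entries of $C'=\tfrac12[B,C]$ with \eqref{Ablowitz-Ladik} are routine, as you say.
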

 
 See Golinskii \cite[Thm. 1]{Gol2006}, \cite[Thm. 9.8.9]{AB}, Simon \cite[\S 42 in Part 2]{Simon}, Nenciu \cite{Nenciu} for a proof of this theorem.

\section{Discrete Painlev\'e equation}  \label{sec:DP}

So far we have looked at orthogonal polynomials $p_n(x,t)$ for a measure $d\mu_t(x) = e^{xt}\, d\mu(x)$ on the real line as functions of two variables
$x$ and $t$. However the degree $n$ is also an important parameter and the three term recurrence relation \eqref{3trr} gives the dynamics in this discrete
variable. So one must really consider $p_n(x,t)$ as a functions of three variables $n,x,t$, with $n$ a variable taking values in the discrete set $\mathbb{N}$.
Of course, one can also consider the monic polynomials $P_n(x,t)$ and the three term recurrence relation \eqref{3TRR}. 
Compatibility between the recurrence relation (which acts as a difference equation for the variable $n$) and the behavior in the variable $x$ (differential or difference equation in $x$) will give difference equations for the coefficients $a_n$ and $b_n$ in the recurrence relation \eqref{3trr} or \eqref{3TRR}.

One of the earliest examples are the orthogonal polynomials for the weight $w(x,t)= e^{-x^4+tx^2}$ on $\mathbb{R}$.
In 1976 G\'eza Freud found that $b_n=0$ (because the weight is symmetric around $0$) and
\begin{equation}  \label{dPI}
    4a_n^2 (a_{n+1}^2+a_n^2+a_{n-1}^2 - \frac{t}{2}) = n, 
\end{equation}    
for the case $t=0$, but Shohat already had this recurrence in 1939 but didn't really do anything with it.
Fokas, Its and Kitaev \cite{FIK} realized that \eqref{dPI} should be thought of as a discrete Painlev\'e equation and called it discrete Painlev\'e I
(d-P$_{\scriptstyle\textup{I}}$).
The equation \eqref{dPI} follows from compatiblity between the recurrence relation 
\[    xp_n(x) = a_{n+1} p_{n+1}(x) + a_n p_{n-1}(x), \]
and the structure relation
\[    p_n'(x) = A_np_{n-1}(x) + C_n p_{n-3}(x),   \]
which follows from the fact that $w'(x) = (-4x^3 + 2tx) w(x)$ and integration by parts (see, e.g., \cite[\S 2.1]{WVA}).

Many other situations were investigated and they all have the same features: the measure is  $d\mu(x) = w(x)\, dx$ and the weight 
$w(x)$ satisfies a simple differential equation
\begin{equation}   \label{Pearson}   
   [\sigma(x) w(x)]'  = \tau(x) w(x), 
\end{equation}   
where $\sigma$ and $\tau$ are polynomials, and this gives a structure relation
\begin{equation}   \label{structure}
  \sigma(x) p_n'(x) = \sum_{k=n-t}^{n+s-1} A_{n,k} p_k(x), 
\end{equation}  
where $s = \deg \sigma$ and $t=\max \{ \deg \tau, \deg \sigma -1 \}$. The differential equation \eqref{Pearson} is known as the Pearson equation
and the corresponding orthogonal polynomials are known as semi-classical orthogonal polynomials. The classical orthogonal polynomials
of Hermite, Laguerre and Jacobi correspond to the case when $s \leq 2$ and $t=1$. Compatibility between the three term recurrence
relation \eqref{3trr} and the structure relation \eqref{structure} then gives a system of equations for the unknown recurrence coefficients $a_n, b_n, A_{n,k}$, and eliminating the $A_{n,k}$ then gives a system of two non-linear recurrence relations for the $a_n, b_n$, which can usually be identified
as some discrete Painlev\'e equation. 

The classification of discrete Painlev\'e equations is more complicated than for the Painlev\'e differential equations, which we will encounter in the
next section. Originally one gave names to discrete Painlev\'e equations in a somewhat chronological order, taking into account the
limiting differential equation that appears as a continuum limit (taking $x=nh$ and $h \to 0$), but nowadays one identifies the discrete Painlev\'e
equation using symmetry and geometry, following the work of Okamoto \cite{Okamoto} and Sakai \cite{Sakai}. A very good survey was given by
Kajiwara, Noumi and Yamada \cite{KNY} and is strongly recommended. So the discrete Painlev\'e equation d-P$_{\scriptstyle\textup{I}}$ in
\eqref{dPI} corresponds to d-P$(A_2^{(1)}/E_6^{(1)})$, with symmetry $A_2^{(1)}$ and surface type $E_6^{(1)}$,
where $A_2^{(1)}$ and $E_6^{(1)}$ are affine Weyl groups,  see \cite[Eq. (8.25)]{KNY}.

The structure relation can be replaced by using a lowering and raising operator for the orthogonal polynomials, as was worked out
by Chen and Ismail (see, e.g., \cite[\S 3.2]{Ismail} or \cite[Ch. 4]{WVA}). If we denote the weight function by $w(x)= \exp(-V(x))$, with $V$ twice differentiable,
and define
\begin{eqnarray*}
  A_n(x) &=& a_n \int \frac{V'(x)-V'(y)}{x-y} \ p_n^2(y) w(y)\, dy, \\
  B_n(x) &=& a_n \int \frac{V'(x)-V'(y)}{x-y}\ p_n(y)p_{n-1}(y) w(y)\, dy ,
\end{eqnarray*}
then the lowering operator $L_{1,n}$  and the raising operator $L_{2,n}$ are given by
\[  L_{1,n} = \frac{d}{dx} + B_n(x), \quad   L_{2,n} = - \frac{d}{dx} + B_n(x) + V'(x), \]
and one has
\[     L_{1,n} p_n(x) = A_n(x) p_{n-1}(x), \quad   L_{2,n} p_{n-1}(x) = \frac{a_n}{a_{n-1}} A_{n-1} p_n(x). \]
Combining both operators gives a second order differential equation
\[    L_{2,n} \Bigl( \frac{1}{A_n(x)} \bigl(L_{1,n} p_n(x) \bigr) \Bigr)  =  \frac{a_n}{a_{n-1}} A_{n-1} p_n(x).  \]
The compatibility between this differential equation (in the variable $x$) and the three term recurrence relation \eqref{3trr} (in the variable $n$)
then gives a system of non-linear difference equations for the recurrence coefficients $a_n,b_n$.

Observe that the functions $A_n$ and $B_n$ not only depend directly on the weight function $w$ and the potential $V$, but also on the polynomials
$p_n$ and $p_{n-1}$, and hence it is not always possible to compute $A_n$ and $B_n$ explicitly without knowing the orthogonal polynomials.
However, one can usually figure out that they are rational functions with known poles and then introduce some unknown parameters. The compatibility
relations will then give relations connecting these parameters with the recurrence coefficients, and eliminating the parameters then give the required
difference equations for the recurrence coefficients $a_n,b_n$. Let us give some examples supplementing the earlier example  $w(x)=\exp(-x^4+tx^2)$
on $(-\infty,\infty)$.
\begin{itemize}
\item (Chen and Its \cite{ChenIts}) if $w(x) = x^\alpha e^{-x-t/x}$ for $x \in [0,\infty)$, then
\[   A_n(x) =  a_n \left(\frac{1}{x} + \frac{c_n}{x^2}\right) , \quad  B_n(x) = -\frac{n}{x} + \frac{d_n}{x^2} . \]
The recurrence coefficients can be expressed in terms of $c_n$ and $d_n$ as
\[  b_n = 2n+\alpha + 1 + c_n, \quad  a_n^2 = n(n+\alpha) +d_n + \sum_{j=0}^{n-1} c_j. \]
If we put $x_n=1/c_n$ and $y_n=d_n$ then
\begin{align*}      
    x_n+x_{n-1} &= \frac{nt-(2n+\alpha)y_n}{y_n(y_n-t)}, \\
    y_n+y_{n+1} &= t - \frac{2n+\alpha+1}{x_n} - \frac{1}{x_n^2}, 
\end{align*}
which can be identified as d-P$((2A_1)^{(1)}/D_6^{(1)})$.    
\item (Basor, Chen, Ehrhardt \cite{BCE}) the Toda evolution of the Jacobi weight is $w(x)=(1-x)^\alpha(1+x)^\beta e^{-tx}$ on $[-1,1]$.
The $A_n$ and $B_n$ are rational functions with poles at $\pm 1$:
\[  A_n(x) = a_n \left( \frac{R_n}{1-x} + \frac{t+R_n}{1+x} \right), \quad  B_n(x) = \frac{r_n}{1-x} + \frac{r_n-n}{1+x}.  \]
The recurrence coefficients can be expressed in terms of $r_n$ and $R_n$ as
\begin{align*}
   tb_n &= 2n+1+\alpha+\beta - t - 2R_n, \\   
   t(t+R_n) a_n^2 &= n(n+\beta) -(2n+\alpha+\beta)r_n - \frac{tr_n(r_n+\alpha)}{R_n},  
\end{align*}   
and $(r_n,R_n)$ satisfy
\begin{align*}
  2t(r_n+r_{n+1}) &= 4R_n^2-2R_n(2n+1+\alpha+\beta-t) -2\alpha t, \\
  \left( \frac{t}{R_n} +1 \right) \left(\frac{t}{R_{n-1}}+1 \right) &= 1+ \frac{n(n+\beta)-(2n+\alpha+\beta)r_n}{r_n(r_n+\alpha)}.
\end{align*}
 An identification as a discrete Painlev\'e equation was not made, but after an appropriate transformation $r_n \to q_n$ and $t/R_n \to f_n-1$ one can recognize d-P$(D_4^{(1)}/D_4^{(1)})$. 
\item (Boelen-Van Assche \cite{BVA}, Clarkson-Jordaan \cite{ClarkJor}) for the modified Laguerre weight
$w(x)=x^\alpha e^{-x^2+tx}$ on $[0,\infty)$ the recurrence coefficients can be written as
\[   2a_n^2 = y_n+n+\alpha/2, \quad 2b_n = t - \frac{\sqrt{2}}{x_n}, \]
and the $(x_n,y_n)$ satisfy the system 
\begin{align*}
     x_nx_{n-1} &= \frac{y_n + n+\alpha/2}{y_n^2-\alpha^2/4}, \\
     y_n+y_{n+1} &= \frac{1}{x_n} \left( \frac{t}{\sqrt{2}} - \frac{1}{x_n} \right).
\end{align*}
Using a rational transformation $(x_n,y_n) \to (p_n,q_n)$, Dzhamay et al.  \cite[Eq. (3.86)]{DFS1} were able to identify the resulting equations
\begin{align*}
   q_n + q_{n+1} &= p_n-t- \frac{n+\alpha+2}{p_n}, \\
   p_n+p_{n-1} &= q_n + t - \frac{n+1}{q_n},
\end{align*}
as the discrete Painlev\'e equations d-P$(A_2^{(1)}/E_6^{(1)})$ .    
\end{itemize}

This also works for discrete orthogonal polynomials, i.e., when the orthogonality is in terms of a discrete measure on a lattice or a $q$-lattice.
Some examples of orthogonal polynomials on the lattice $\mathbb{N}$ are
\begin{itemize}
\item Generalized Charlier polynomials: the orthogonality relations are
\[    \sum_{k=0}^\infty P_n(k)P_m(k) \frac{a^k}{k! (\beta)_k} = 0, \qquad m \neq n,  \]
with $a, \beta >0$. The discrete weight $w_k=a^k/(\beta)_k k!$ satisfies the discrete Pearson equation
\[  w_{k-1}=\frac{k(\beta+k-1)}{a} w_k, \]
and the orthonormal polynomials have the structure relation
\[  p_n(x+1)-p_n(x) = A_n p_{n-1}(x) + B_n p_{n-2}(x).  \]
Compatibility of this structure relation with the three term recurrence relation \eqref{3trr} gives the discrete Painlev\'e equations \cite[Thm. 3.7]{WVA}
\begin{align*}
    (a_{n+1}^2-a)(a_n^2-a) &= ad_n(d_n+\beta-1), \\
     d_{n} + d_{n-1} &= -n-\beta+1+\frac{an}{a_n^2},
\end{align*}
with $d_n=b_n-n$, which is a limiting case of d-P$(D_4^{(1)}/D_4^{(1)})$. Observe that for $a=e^t$ the recurrence coefficients $a_n^2(t)$ and $b_n(t)$ satisfy
the Toda-lattice equations \eqref{Toda-a}--\eqref{Toda-b}.     
\item Generalized Meixner polynomials: the orthogonality relations are
\[   \sum_{k=0}^\infty P_n(k) P_m(k) \frac{(\gamma)_k a^k}{(\beta)_k k!} = 0, \qquad m \neq n, \]
with $a,\beta,\gamma >0$. Here it is more convenient to use the ladder operators to obtain the discrete Painlev\'e equation \cite[\S 5.3]{WVA}
\begin{align*}
  (u_n+v_n)(u_{n+1}+v_n) &= \frac{\gamma-1}{a^2} v_n(v_n-a) \left( v_n - a \frac{\gamma-\beta}{\gamma-1} \right), \\
  (u_n+v_n)(u_n+v_{n-1}) &= \frac{u_n}{u_n-\frac{an}{\gamma-1}} (u_n+a) \left( u_n + a \frac{\gamma-\beta}{\gamma-1} \right), 
\end{align*}
and the recurrence coefficients are given by 
\[   a_n^2 = na - (\gamma-1)u_n, \quad  b_n = n+\gamma-\beta+a - \frac{\gamma-1}{a} v_n.  \]
These discrete Painlev\'e equations correspond to d-P$(E_6^{(1)}/A_2^{(1)})$ in \cite{KNY}.
Again, for $a=e^t$ the recurrence coefficients satisfy the Toda lattice equations \eqref{Toda-a}--\eqref{Toda-b}.
\item Hypergeometric weights: the orthogonality relations are
\[   \sum_{k=0}^\infty P_m(k)P_n(k) \frac{(\alpha)_k(\beta)_k a^k}{(\gamma)_k k!} = 0, \qquad m \neq 0, \]
where $\alpha,\beta,\gamma >0$ and $0 < a < 1$. These orthogonal polynomials were investigated in \cite{FWVA}, who obtained
the discrete Painlev\'e equations, but later on these were simplified in \cite[Thm. 1]{DFS0}. The discrete Painlev\'e equations
are
\begin{align*}
   f_{n+1}f_n &= \frac{g_n(g_n- \gamma+\beta)}{c(g_n+n+\beta-\gamma+1)(g_n+n+\alpha+\beta-\gamma)}, \\
   g_n+g_{n-1} &= -2n+\gamma-\alpha -2\beta+\gamma - \frac{n+\beta}{f_n-1} -\frac{n+\alpha-\gamma}{cf_n-1},
\end{align*}    
which corresponds to d-P$(D_4^{(1)}/D_4^{(1)})$, where
{\small
\[   f_n = \frac{(x_n - \beta)(x_n - \gamma)}{c \bigl((x_n - \alpha)(x_n - \beta) - nx_n - y_n\bigr)}, \]
\[  \rule{-20pt}{0pt} g_n =  - \frac{(x_n - \gamma)\Bigl( \bigl( (x_n - \alpha)(x_n - \beta) - nx_n - y_n \bigr) - t(x_n - \beta)(x_n - \gamma + \beta + n) \Bigr)}{\bigl((x_n - \alpha)(x_n - \beta) - nx_n - y_n\bigr) - t(x_n - \beta)(x_n - \gamma)},  \]}
where the recurrence coefficients are given by
\begin{align*}
   \frac{1 - c}{c} a_n^2 &= y_n + \sum_{k=0}^{n-1} x_k + \frac{n(n + \alpha + \beta - \gamma - 1)}{1-c} , \\
     b_n &= x_n + \frac{n + (n + \alpha + \beta)c - \gamma)}{1-c}.   
\end{align*}
Again the recurrence coefficients satisfy the Toda lattice equations if $c=e^t$.     
\end{itemize}

The prototype example for orthogonal polynomials on the unit circle is the weight function $v(\theta) = e^{t\cos \theta}$, 
where $z=e^{i\theta}$. The orthogonal polynomials
for this weight on the unit circle were first investigated by Periwal and Shevitz \cite{PerShe1990} and they are related to unitary random matrices.
The Verblunsky coefficients for these orthogonal polynomials $\alpha_n(t)$ are real and satisfy \cite[Thm. 3.2]{WVA} 
\[   -\frac{t}{2} (1-\alpha_n^2)(\alpha_{n+1}+\alpha_{n-1}) = (n+1)\alpha_n, \]
which corresponds to discrete Painlev\'e II (d-P$_{\scriptstyle\textup{II}}$). As a function of $t$ they also satisfy the Ablowitz-Ladik equation \eqref{Ablowitz-Ladik}
\[   \alpha_n'(t) = \frac12 (1-\alpha_n^2) (\alpha_{n+1}-\alpha_{n-1}) ,  \]
with initial values $\alpha_n(0)=0$ for $n\geq 0$ and $\alpha_{-1}=-1$, since the orthonormal polynomials for $t=0$ are $\varphi_n(z,0) = z^n$.

\section{Painlev\'e differential equations}  \label{sec:Pain}

The orthogonal polynomials depend on three variables $n,x,t$ and in each of these variables they satisfy differential or difference equations.
The compatibility between these relations for the variables $n$ and $t$ results in the Toda lattice equations or related lattice equations such as
Ablowitz-Ladik (see \S \ref{sec:Toda}). The compatibility between the relations for the variables $n$ and $x$ give discrete Painlev\'e equations 
(see \S \ref{sec:DP}). A combination of the Toda equations and the discrete Painlev\'e equations then gives the compatibility between all three
variables $n,x,t$ of the orthogonal polynomials $P_n(x,t)$. In many cases this results in a second order differential equation that (after some transformation)
can be identified as one of the six Painlev\'e equations. For the examples that we have considered before, one arrives at the following equations.
\begin{itemize}
\item For the weight function $w(x,t) = e^{-x^4+tx^2}$ on $(-\infty,\infty)$ one finds $\textup{P}_{\scriptstyle\textup{IV}}$
\[  x_n''(t) = \frac{(x_n')^2}{2x_n} + \frac{3x_n^3}{2} -tx_n^2 + x_n \left( \frac{n}{4} + \frac{t^2}{8} \right) - \frac{n^2}{32x_n} , \]
where $x_n = a_n^2$ (Magnus \cite{Magnus}). The transformation $2x_n(t) = y(-t/2)$ gives P$_{\scriptstyle \textup{IV}}$ in its usual form.
\item For the weight function $v(\theta) = e^{t\cos \theta}$ on the unit circle, the Verblunsky coefficients $\alpha_n(t)$ satisfy 
\[   \alpha_n'' = - \frac{\alpha_n}{1-\alpha_n^2} (\alpha_n')^2 - \frac{\alpha_n'}{t} - \alpha_n(1-\alpha_n^2) + \frac{(n+1)^2}{t^2} \frac{\alpha_n}{1-\alpha_n^2}, \]
(Periwal and Shevitz \cite{PerShe1990})
and if we put $\alpha_n = (1+y)/(1-y)$ then $y$ satisfies P$_{\scriptstyle\textup{V}}$ but with one of the parameters $\gamma=0$. In that case the
Painlev\'e equation can be transformed to P$_{\scriptstyle\textup{III}}$ \cite[\S 3.1.3]{WVA}. If one puts $w_n=\alpha_n/\alpha_{n-1}$, then 
\[  w_n'' = \frac{(w_n')^2}{w_n} - \frac{w_n'}{t} + \frac{2n}{t} w_n^2 - \frac{2(n+1)}{t} + w_n^3 - \frac{1}{w_n}, \]
which is Painlev\'e III.
\item For the weight function $w(x,t) = x^\alpha e^{-x-t/x}$ on $[0,\infty)$ one has
\[   c_n'' = \frac{(c_n')^2}{c_n} - \frac{c_n'}{t} + (2n+\alpha+1) \frac{c_n^2}{t} + \frac{c_n^3}{t^2} + \frac{\alpha}{s} - \frac{1}{c_n} ,\]
which after a transformation is P$_{\scriptstyle\textup{III}}$ (Chen and Its \cite{ChenIts}).
This shows that the same Painlev\'e equation may appear for the recurrence coefficients
of very different families of orthogonal polynomials. What these families have in common is that the moments can be expressed in terms of Bessel functions
and the solution of the Painlev\'e equations that we want is the special function solution in terms of Bessel functions.
\item For the Jacobi-Toda weight function $w(x,t) = (1-x)^\alpha(1+x)^\beta e^{-tx}$ on $[-1,1]$ one uses the transformation $y(t) = 1+t/R_n(t)$ to find
\begin{multline*}
 y'' = \frac{2y-1}{2y(y-1)} (y')^2 - \frac{y'}{t} + 2(2n+\alpha+\beta+1) \frac{y}{t} - \frac{2y(y+1)}{y-1} \\
   + \frac{(y-1)^2}{t^2} \left( \frac{\alpha^2y}{2}-\frac{\beta^2}{2y} \right), 
\end{multline*}
which corresponds to P$_{\scriptstyle\textup{V}}$ (Basor, Chen and Ehrhardt \cite{BCE}).
\item The modified Laguerre weight $w(x,t) = x^\alpha e^{-x^2+tx}$ on $[0,\infty)$ gives a Painlev\'e IV equation for $x_n$
\[  x_n'' = \frac{3}{2} \frac{(x_n')^2}{x_n} + \frac{\alpha^2}{4} x_n^3 - \frac{x_n}{8} (t^2-4-8n-4\alpha) + \frac{t}{\sqrt{2}} - \frac{3}{4x_n}, \]
and in \cite{DFS1} the identification with the Hamiltonian form of P$_{\scriptstyle\textup{IV}}$ was made.
\item For the generalized Charlier polynomials one puts $x_n(a) = \displaystyle \frac{a}{1-y(a)}$ to find 
\[ y'' = \left( \frac{1}{2y} + \frac{1}{y-1} \right) (y')^2 - \frac{y'}{a} + \frac{(y-1)^2}{a^2} \left( \frac{n^2}{2}y - \frac{(\beta-1)^2}{2y} \right) - \frac{2y}{a}. \]
which is P$_{\scriptstyle\textup{V}}$ but with one of the parameters $\delta=0$. This can be transformed to P$_{\scriptstyle\textup{III}}$, see
\cite[\S 3.2.3]{WVA}.
The moments of the generalized Charlier polynomials are in terms of modified Bessel functions.
\item The generalized Meixner polynomials have a genuine P$_{\scriptstyle\textup{V}}$ with all the parameters different from $0$.
One has  \cite[\S 5.3]{WVA}
\[ y'' =  \left( \frac{1}{2y} + \frac{1}{y-1} \right) (y')^2 - \frac{y'}{a} + \frac{(y-1)^2}{a^2} \left(Ay+ \frac{B}{y}\right) + \frac{Cy}{a} +
  \frac{Dy(y+1)}{y-1}, \]
  with
  \[  A= \frac{(\beta-1)^2}{2}, \quad B= -\frac{n^2}{2}, \quad C = n-\beta+2\gamma, \quad D = -\frac12.  \]
Here $y(a)$ is related to $v_n(a)$ by the rational transformation
\[    v_n(a) = \frac{a\bigl(ay'-(1+\beta-2\gamma)y^2+(1+n-a+\beta-2\gamma)y - n\bigr)}{2(\gamma-1)(y-1)y}.  \]
The moments are in terms of the confluent hypergeometric function $M(a,b,z)$ and the solution we need is the special function solution
of P$_{\scriptstyle\textup{V}}$. 
\item For the hypergeometric weights, Dzhamay et al.\ \cite{DFS1} found the appropriate transformation that leads to Painlev\'e VI:
\begin{multline*}
   f'' = \frac12 \left( \frac{1}{f} + \frac{1}{f-1} + \frac{1}{f-t} \right) (f')^2 - \left( \frac{1}{t} + \frac{1}{t-1} + \frac{1}{f-t} \right) f' \\
      + \frac{f(f-1)(f-t)}{t^2(t-1)^2} \left( A + B \frac{t}{f^2} + C \frac{t-1}{(f-1)^2} + D \frac{t(t-1)}{(f-t)^2} \right), 
\end{multline*}
with $ct=1$ and parameters
{\small
\[  A = \frac{(\alpha-1)^2}{2}, \quad B=- \frac{(\beta-\gamma)^2}{2}, \quad C= \frac{(n+\beta)^2}{2}, \quad D= \frac12  - \frac{(n+\alpha-\gamma)^2}{2}. \]  }    
The moments of these discrete weights are in terms of Gauss hypergeometric functions, and the solution that we need is a special function
solution of P$_{\scriptstyle\textup{VI}}$.
\end{itemize}

\section{Conclusion}
Orthogonal polynomials on the real line and the unit circle for semi-classical weights have a remarkable underlying integrable structure. One needs to
look at orthogonal polynomials $P_n(x,t)$ as functions of three variables $n,x,t$, where $n$ is discrete and denotes the degree, $x$ is the usual variable for the polynomial which may be continuous or discrete, an $t$ is the time variable for the Toda-evolution. The three term recurrence relation for the
orthogonal polynomials plays a crucial role and the corresponding Jacobi operator is part of the Lax pair for the Toda lattice. On the unit circle one uses the CMV matrix instead of the Jacobi matrix. Differential equations or difference equations for the orthogonal polynomials (in the variable $x$) lead to non-linear
difference equations for the recurrence coefficients which can be identified as discrete Painlev\'e equations. This identification is not straightforward but
recently a lot of progress has been made using the geometric theory behind (discrete) Painlev\'e equations (Kajiwara et al. \cite{KNY}, Dzhamay et al. \cite{DFS0,DFS1}). A combination of the discrete Painlev\'e equations and the Toda equations finally leads to Painlev\'e equations for the
recurrence coefficients. In all the cases considered one needs the special function solutions of the Painlev\'e equations, and the special functions
are already visible in the first few moments of the weights under consideration. This simplifies the identification because one knows which special functions
can appear in special function solutions of Painlev\'e equations, see e.g., Clarkson \cite{Clarkson}.

\end{document}